\newtheorem{theorem}{Theorem}[section]
\newtheorem{corollary}[theorem]{Corollary}
\theoremstyle{definition}
\numberwithin{equation}{section}
\begin{document}
\setcounter{page}{1}
\title{\vspace{-1.5cm}
\vspace{.5cm}
\vspace{.7cm}
{\large{\bf On $\theta$-centralizing $\theta$-generalized derivations on convolution algebras} }}
\date{}
\author{{\small \vspace{-2mm} M. Eisaei$^1$, M. J. Mehdipour$^2$\footnote{Corresponding author} and Gh. R. Moghimi $^3$}}
\affil{\small{\vspace{-4mm} $^{1, 3}$ Department of Mathematics, Payame Noor University (PNU),
Tehran, Iran }}
\affil{\small{\vspace{-4mm} $^1$ Department of Mathematics, Shiraz University of Technology, 71555-313, Shiraz, Iran. }}
\affil{\small{\vspace{-4mm} mojdehessaei59@student.pnu.ac.ir}}
\affil{\small{\vspace{-4mm} mehdipour@sutech.ac.ir}}
\affil{\small{\vspace{-4mm} moghimimath@pnu.ac.ir}}
\maketitle
\hrule
\begin{abstract}
\noindent
Let $\theta$ be an isomorphism on $L_0^{\infty} (w)^*$. 
In this paper, we investigate $\theta$-generalized derivations on 
$L_0^{\infty} (w)^*$. We show that every $\theta$-centralizing $\theta$-generalized derivation on $L_0^{\infty} (w)^*$ is a $\theta$-right centralizer. We also prove that this result is true for $\theta$-skew centralizing $\theta$-generalized derivations. 
\end{abstract}

\noindent \textbf{Keywords}: $\theta$-generalized derivations, $\theta$-centralizing, isomorphisms\\
{\textbf{2020 MSC}}: 43A15, 16W25, 47B47.
\\
\hrule
\vspace{0.5 cm}
\baselineskip=.8cm
\section{Introduction}
Let $w:[0, \infty) \rightarrow [1, \infty)$ be a continuous function such that $w(0)=1$ and for every $x,y \geq 0$ 
$$w(x+y) \leq w(x) w(y).$$
Let $L^1(w)$ be the Banach algebra of all Lebesgue measurable functions on $[0, \infty)$. 
Let also $M(w)$ be the Banach algebra of all complex regular Borel measure on $[0, \infty)$; for study of these Banach algebras see[5, 13]. 
We denote by $L_0^{\infty} (w)$ the Banach space of all Lebesgue measurable functions $f$ on $[0, \infty)$ such that 
\begin{equation*} 
\hbox{ess sup} \{ f(y) \chi_{(x, \infty)}(y) / w(y): ~ y \geq 0 \big \} \rightarrow 0
\end{equation*}
as $x \rightarrow + \infty$, where 
$\chi_{(x, \infty)}$ is the characteristic function on $(x, \infty)$. 
It is proved that the dual of $L_0^{\infty} (w)$, represented by $L_0^{\infty} (w)^*$, is a Banach algebra with the first Arens product defined by 
$$mn(f)=m(nf),$$
where the functional $nf$ is defined by
$nf(\varphi)= n(f \varphi)$, in which 
$$f \varphi(x)= \int_{0}^{\infty} f(x+y) \varphi(y) dy$$
for all $m,n \in L_0^{\infty} (w)^*$, $f \in L_0^{\infty} (w)$, $\varphi \in L^1(w)$ and $x \geq 0$; for more details see [8, 9]. 
By the usual way, $L^1(w)$ may be regarded as a subspace of 
$L_0^{\infty} (w)^*$. 
In this case, $L^1(w)$ is a closed ideal of $L_0^{\infty} (w)^*$. 
Note that the sequence $\{i \chi_{(0, 1/i)} \}_{i \in \Bbb{N}}$ is a bounded approximate identity for $L^1(w)$. 
The set of all weak$^*$-cluster points of an approximate identity of $L^1(w)$ 
bounded by one is denoted by $\Lambda(L_0^{\infty} (w)^*)$. 
It is easy to see that $u \in \Lambda(L_0^{\infty} (w)^*)$
if and only if $u$ is a right identity for $L_0^{\infty} (w)^*$. 

Let $\theta$ be a homomorphism on $L_0^{\infty} (w)^*$ and $T$ be a linear map on $L_0^{\infty} (w)^*$.
Then $T$ is called a \textit{ $\theta$-right centralizer} if for every $m , n \in L_0^{\infty} (w)^*$
\begin{eqnarray*}
T(mn)= \theta(m) T(n).
\end{eqnarray*}
A linear map $d$ on $L_0^{\infty} (w)^*$ is called a
\textit{$\theta$-derivation} if
\begin{eqnarray*}
d(mn)=d(m) \theta(n) + \theta(m) d(n)
\end{eqnarray*}
for all $m,n \in L_0^{\infty} (w)^*$.
Also, a linear map $D$ on $L_0^{\infty} (w)^*$ is called a
\textit{$\theta$-generalized derivation with associated to derivation $d$}, if
\begin{eqnarray*}
D(mn)=\theta(m) D(n)+ d(m) \theta(n)
\end{eqnarray*}
for all $m,n \in L_0^{\infty} (w)^*$.
We denote this concept with $(D, d)$.
Note that if $D=d$, then $D$ is a $\theta$-derivation. 
In the case where $d=0$, then for every $m,n \in L_0^{\infty} (w)^*$ 
$$D(mn)=\theta(m) D(n).$$
This type of $\theta$-generalized derivation is called a \textit{$\theta$-right centralizer}. 
A linear map $T$ on $ L_0^{\infty} (w)^*$ is called \textit{$\theta$-commuting} if for every $m \in L_0^{\infty} (w)^*$
\begin{eqnarray*}
T(m) \theta(m)=\theta(m) T(m),
\end{eqnarray*}
and $T$ is called \textit{$\theta$-centralizing} if
\begin{eqnarray*}
[T(m), \theta(m) ] := T(m) \theta(m)- \theta(m) T(m) \in Z (L_0^{\infty} (w)^*)
\end{eqnarray*}
for all $m \in L_0^{\infty} (w)^*$, where $Z (L_0^{\infty} (w)^*)$ is the center of $L_0^{\infty} (w)^*$.

Some authors studied the Banach algebra $L_0^{\infty} (w)^*$ [1,2,10-12]. 
For example, Ahmadi Gandomani and the second author [1] studied generalized derivations on $L_0^{\infty} (w)^*$. 
They showed that every centralizing generalized derivation on 
$L_0^{\infty} (w)^*$ is a right centralizer. They proved that this result holds for skew centralizing generalized derivations; see also [3,6,7]. 
In this paper, we investigate these facts for $\theta$-generalized derivations on $L_0^{\infty} (w)^*$.


\section{Main Results}

In the following, let $\text{ran} (L_0^{\infty} (w)^*)$ be the right annihilator of $L_0^{\infty} (w)^*$, that is, the set of all 
$r \in L_0^{\infty} (w)^*$ such that $mr=0$ for all $m \in L_0^{\infty} (w)^*$.
\begin{theorem} \label{m1}
Let $\theta $ be a homomorphism on $L_0^{\infty} (w)^*$ and $(D, d)$ be a $\theta$-generalized derivation on
$L_0^{\infty} (w)^*$. Then the following statements hold.

\emph{(i)}
$D$ maps $\emph{ran} (L_0^{\infty} (w)^*)$ into $\emph{ran} (L_0^{\infty} (w)^*)$.

\emph{(ii)}
$D$ is $\theta$-centralizing if and only if $D$ is $\theta$-commuting.

\emph{(iii)}
If $\theta$ is an isomorphism and $D$ is $\theta$-centralizing, then $D$ is a $\theta$-right centralizer.
\end{theorem}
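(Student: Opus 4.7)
The plan is to handle the three parts sequentially, exploiting two structural facts about $L_0^{\infty}(w)^*$: the presence of a right identity $u \in \Lambda(L_0^{\infty}(w)^*)$, which forces the left annihilator $\{a : am = 0 \text{ for all } m\}$ to be $\{0\}$; and the fact that $\text{ran}(L_0^{\infty}(w)^*)$ is a two-sided ideal meeting the center trivially. Throughout I treat $\theta$ as the isomorphism fixed in the abstract, so that $\theta$ carries $\text{ran}(L_0^{\infty}(w)^*)$ onto itself.

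For (i), given $r \in \text{ran}(L_0^{\infty}(w)^*)$ and any $m$, applying $D$ and $d$ to the equation $mr = 0$ yields
$$\theta(m)D(r) + d(m)\theta(r) = 0, \qquad \theta(m)d(r) + d(m)\theta(r) = 0.$$
Since $\theta(r) \in \text{ran}(L_0^{\infty}(w)^*)$, the term $d(m)\theta(r)$ vanishes, so $\theta(m)D(r) = 0$ for every $m$. Surjectivity of $\theta$ then gives $nD(r) = 0$ for every $n$, meaning $D(r) \in \text{ran}(L_0^{\infty}(w)^*)$.

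For (ii), the direction $\theta$-commuting $\Rightarrow$ $\theta$-centralizing is immediate. For the converse I first note that $Z(L_0^{\infty}(w)^*) \cap \text{ran}(L_0^{\infty}(w)^*) = \{0\}$: any $z$ in the intersection satisfies $uz = 0$ (as $z \in \text{ran}$) and $zu = uz$ (as $z \in Z$), hence $z = zu = 0$. For $m \in \text{ran}(L_0^{\infty}(w)^*)$, part (i) gives $D(m) \in \text{ran}$, so $\theta(m)D(m) = 0$ and $[D(m),\theta(m)] = D(m)\theta(m)$ lies in $\text{ran} \cap Z = \{0\}$. For arbitrary $m$ I would linearize $[D(m+n),\theta(m+n)] \in Z$ against $n \in \text{ran}$, using that mixed commutators of the form $[D(m),\theta(n)]$ and $[D(n),\theta(m)]$ then land in the right annihilator, and invoke the previous step to push the conclusion to all of $L_0^{\infty}(w)^*$.

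For (iii), by (ii) $D$ is $\theta$-commuting, and linearizing yields
$$[D(m),\theta(n)] + [D(n),\theta(m)] = 0 \qquad (m,n \in L_0^{\infty}(w)^*). \quad (\star)$$
Replacing $n$ by $mn$, expanding $D(mn) = \theta(m)D(n) + d(m)\theta(n)$ and $\theta(mn) = \theta(m)\theta(n)$, and applying the Leibniz identity $[a,bc] = [a,b]c + b[a,c]$ together with $(\star)$ and $[D(m),\theta(m)] = 0$, the $D$-terms cancel, leaving
$$[d(m),\theta(m)]\theta(n) + d(m)[\theta(n),\theta(m)] = 0,$$
which simplifies to $d(m)\theta(n)\theta(m) = \theta(m)d(m)\theta(n)$ for all $m,n$. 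Surjectivity of $\theta$ lets $\theta(n)$ range over $L_0^{\infty}(w)^*$; choosing $\theta(n)=u$ forces $d(m)\theta(m) = \theta(m)d(m)$, so $d$ is itself $\theta$-commuting, and substituting this back produces
$$d(m)\,[\theta(m),b] = 0 \qquad (m,b \in L_0^{\infty}(w)^*).$$
The main obstacle is to bootstrap this identity into $d \equiv 0$. My plan is to linearize in $m$ to get $d(m)[\theta(m'),b] + d(m')[\theta(m),b] = 0$, specialize $m'$ to the right identity (which, combined with $d$ being a $\theta$-derivation, yields $d(u)=0$) and then to elements of $\text{ran}(L_0^{\infty}(w)^*)$ (using $\theta(m') \in \text{ran}$ and the two-sided ideal structure to kill off cross-terms), and finally exploit the non-commutativity of $L_0^{\infty}(w)^*$ together with the trivial left annihilator to force $d(m)=0$ for every $m$. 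Once $d = 0$, the defining identity collapses to $D(mn) = \theta(m)D(n)$, exhibiting $D$ as a $\theta$-right centralizer.
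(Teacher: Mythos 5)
Part (i) of your proposal is essentially the paper's argument and is fine (modulo the fact that you assume $\theta$ is an isomorphism throughout, whereas (i) and (ii) are stated for a homomorphism; surjectivity is what guarantees $\theta(r)\in\mathrm{ran}(L_0^{\infty}(w)^*)$). The real problems are in (ii) and (iii), and they stem from one missing structural fact that the paper's proofs silently rely on: $L_0^{\infty}(w)^*/\mathrm{ran}(L_0^{\infty}(w)^*)$ is isomorphic to the commutative algebra $M(w)$, so \emph{every} commutator $[a,b]$ lies in $\mathrm{ran}(L_0^{\infty}(w)^*)$, and $\mathrm{ran}(L_0^{\infty}(w)^*)$ is killed by left multiplication. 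For (ii) the paper simply writes $[D(m),\theta(m)]=[D(m),\theta(m)]u=u[D(m),\theta(m)]=0$, the last equality being exactly this fact. Your plan --- prove the claim for $m\in\mathrm{ran}(L_0^{\infty}(w)^*)$ and then ``linearize against $n\in\mathrm{ran}$ to push the conclusion to all of $L_0^{\infty}(w)^*$'' --- does not close: linearizing against annihilator elements only controls cross-terms like $D(n)\theta(m)-\theta(n)D(m)$ and gives no information about $[D(m),\theta(m)]$ for general $m$. (Ironically, your correct observation that $Z\cap\mathrm{ran}=\{0\}$ combined with the commutativity-mod-$\mathrm{ran}$ fact would finish (ii) in one line.)

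Part (iii) is where the approach genuinely fails. Your Posner-style computation is algebraically correct and yields $d(m)[\theta(n),\theta(m)]=0$, but in this algebra that identity is \emph{vacuous}: $[\theta(n),\theta(m)]$ already lies in $\mathrm{ran}(L_0^{\infty}(w)^*)$, and $x r=0$ for every $x$ and every $r\in\mathrm{ran}(L_0^{\infty}(w)^*)$, so the identity holds for an arbitrary map $d$ and cannot force $d=0$. $L_0^{\infty}(w)^*$ is as far from prime as possible, so the ``non-commutativity plus trivial left annihilator'' endgame has nothing to work with. The paper's route is different and uses the specific structure: from $\theta$-commuting one gets $d(u)=0$; the idempotent identity $(u+r)^2=u+r$ for $r\in\mathrm{ran}(L_0^{\infty}(w)^*)$ gives $d(r)=0$; the cited fact that ($\theta$-)derivations map $L_0^{\infty}(w)^*$ into $\mathrm{ran}(L_0^{\infty}(w)^*)$ (reference [12]) then yields $d(m)=d(um)=d(u)\theta(m)+\theta(u)d(m)=0$ for all $m$; finally $D(m)=D(mu)=\theta(m)D(u)$. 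None of these ingredients --- the idempotent trick, the external fact about where derivations land, or the commutativity of the quotient --- appears in your sketch, and without them the argument does not go through.
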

\begin{proof}
\text{(i)} Let $k \in L_0^{\infty} (w)^*$,
$u \in \Lambda(L_0^{\infty} (w)^*)$ and
$r \in \text{ran}(L_0^{\infty} (w)^*)$. Then 
\begin{equation*}
k D(r) = k \theta(u) D(r)= k [D(ur)- d(u) \theta(r)]=0,
\end{equation*}
where $u \in \Lambda(L_0^{\infty} (w)^*)$.
Hence $D(r) \in \text{ran}(L_0^{\infty} (w)^*)$.

$\text{(ii)}$ Let $m \in L_0^{\infty} (w)^*$ and
\begin{equation*}
[D(m), \theta(m)] \in Z(L_0^{\infty} (w)^*).
\end{equation*}
So if $u \in \Lambda(L_0^{\infty} (w)^*)$, then
\begin{eqnarray*}
[D(m), \theta(m)] &=& [ D(m), \theta (m)] u\\
& =& u [ D(m), \theta (m)] \\
&= & u( D(m) \theta(m)- \theta(m) D(m)) \\
&=& u D(m) \theta(m) - u D(m) \theta(m) \\
&=&0.
\end{eqnarray*}
It follows that $D$ is $\theta$-commuting.

$\text{(iii)}$ Let $\theta$ be an isomorphism and $D$ be $\theta$-centralizing.
Then for every $m \in L_0^{\infty} (w)^*$
\begin{equation*}
D(m) \theta(m)= \theta(m) D(m).
\end{equation*}
Choose $u \in \Lambda(L_0^{\infty} (w)^*)$. Then
\begin{equation} \label{m1}
D(u)= D(u) \theta(u)= \theta(u) D(u).
\end{equation}
On the other hand,
\begin{equation} \label{m2}
D(u)= D(uu)= \theta(u) D(u) + d(u) \theta(u)= \theta(u) D(u)+ d(u).
\end{equation}
From this and (2.1), we have $d(u)=0$.
Note that if $r \in \text{ran}(L_0^{\infty} (w)^*)$, then
\begin{eqnarray*}
(r+u)^2 =r+u
\end{eqnarray*}
and $\theta(u+r)$ is a right identity for $L_0^{\infty} (w)^*$.
So
\begin{eqnarray*}
D(u+r) &=& D(u+r) \theta(u+r) \\
&=& D((u+r)^2)- d(u+r) \theta(u+r) \\
&=& D(u+r) - d(r).
\end{eqnarray*}
Hence $d(r)=0$.
This shows that for every $m \in L_0^{\infty} (w)^*$,
\begin{eqnarray*}
d(m) &=& d(um) \\
&=& d(u) \theta(m) + \theta(u) d(m) \\
&=& \theta(u) d(m) \\
&=&0,
\end{eqnarray*}
because $d$ maps $L_0^{\infty} (w)^*$ into $\text{ran}(L_0^{\infty} (w)^*)$; see [12]. Therefore,
\begin{equation*}
D(m)= D(mu) = \theta(m) D(u) + d(m) \theta (u) = \theta(m) D(u)
\end{equation*}
for all $m \in L_0^{\infty} (w)^*$.
That is, $D$ is a $\theta$-right centralizer.
\end{proof}
Let $\theta$ be a homomorphism on $L_0^{\infty} (w)^*$. Then 
a map $T: L_0^{\infty} (w)^* \rightarrow L_0^{\infty} (w)^*$ is called \textit{$\theta$-skew commuting} if for every 
$m \in L_0^{\infty} (w)^*$ 
\begin{equation*} 
\langle T(m), \theta(m) \rangle := T(m) \theta(m) + \theta(m) T(m)=0,
\end{equation*}
if for every $m \in L_0^{\infty} (w)^*$, 
\begin{equation*} 
\langle T(m), \theta(m) \rangle \in Z(L_0^{\infty} (w)^*), 
\end{equation*}
then $T$ is called \textit{$\theta$-skew cenralizing}. 
\begin{theorem} \label{m2}
Let $\theta$ be an isomorphism on $L_0^{\infty} (w)^*$ and
$(D, d)$ be a $\theta$-generalized derivation on $L_0^{\infty} (w)^*$. Then the following statements hold.

\emph{(i)}
If $D$ is $\theta$-skew centralizing, then there exists $n \in Z (L_0^{\infty} (w)^*)$ such that
$D(m)=mn$ for all $m \in L_0^{\infty} (w)^*$.

\emph{(ii)} If $D$ is $\theta$-skew commuting, then $D=0$ on
$L_0^{\infty} (w)^*$.
\end{theorem}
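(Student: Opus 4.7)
I sketch (ii) first as it is cleaner, then (i).

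For (ii), fix $u \in \Lambda(L_0^\infty(w)^*)$; since $\theta$ is an isomorphism, $\theta(u)$ is a right identity for $L_0^\infty(w)^*$. The skew-commuting hypothesis at $m = u$ gives $D(u) + \theta(u) D(u) = 0$. For any $k$, the right-identity property yields $kD(u) = k\theta(u)D(u) = -kD(u)$, so $D(u) \in \text{ran}(L_0^\infty(w)^*)$; this forces $\theta(u) D(u) = 0$ and hence $D(u) = 0$. The identity $D(m) = D(mu) = \theta(m) D(u) + d(m)\theta(u)$ collapses to $D = d$, and since $d$ maps into $\text{ran}(L_0^\infty(w)^*)$ by [12], $\theta(m) D(m) = 0$ for every $m$. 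The skew-commuting hypothesis then forces $D(m)\theta(m) = 0$; linearizing (replacing $m$ by $m+k$) produces $D(m)\theta(k) + D(k)\theta(m) = 0$, and setting $k = u$ gives $D(m) = 0$.

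For (i), the plan parallels part (iii) of the previous theorem. A key tool is the identity $u(ab) = u(ba)$, valid for $u \in \Lambda(L_0^\infty(w)^*)$ and arbitrary $a, b \in L_0^\infty(w)^*$ (it follows from the decomposition $L_0^\infty(w)^* = L^1(w) \oplus \text{ran}(L_0^\infty(w)^*)$ and commutativity of $L^1(w)$, and is the identity implicitly used in the proof of part (ii) of the previous theorem). Multiplying $\langle D(m), \theta(m)\rangle \in Z(L_0^\infty(w)^*)$ by $u$ and using centrality yields $\langle D(m), \theta(m)\rangle = 2uD(m)\theta(m)$; at $m = u$ this produces the candidate central element $n := uD(u) \in Z(L_0^\infty(w)^*)$. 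Applying the skew-centralizing hypothesis to $u + \lambda r$ with $r \in \text{ran}(L_0^\infty(w)^*)$ and extracting the coefficient of $\lambda$ gives $D(r) + \theta(r) D(u) \in Z(L_0^\infty(w)^*) \cap \text{ran}(L_0^\infty(w)^*) = \{0\}$, hence $D(r) = -\theta(r) D(u)$. Combining this with $D(m) = \theta(m) D(u) + d(m)$ (from $D(mu)$) and $d(u) \in \text{ran}(L_0^\infty(w)^*)$ forces $d(u) = 0$ and $D(u) = n$.

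To finish, substitute $D(m) = \theta(m) n + d(m)$ into $\langle D(m), \theta(m)\rangle = 2uD(m)\theta(m)$, using $ud(m) = 0 = \theta(m) d(m)$ (since $d(m) \in \text{ran}(L_0^\infty(w)^*)$) to obtain $d(m)\theta(m) = -2(\theta(m^2) - u\theta(m^2)) n$. Linearizing (replacing $m$ by $m + k$) and setting $k = u$, then using centrality of $n$ together with $\theta(u) n = n$, yields $d(m) = -4(\theta(m) - u\theta(m)) n$. Matching with the earlier identity $d(r) = -2\theta(r) n$ on $\text{ran}(L_0^\infty(w)^*)$ forces $\theta(r) n = 0$ for every $r \in \text{ran}(L_0^\infty(w)^*)$, so that $\text{ran}(L_0^\infty(w)^*) \cdot n = 0$; this makes $d \equiv 0$ and reduces $D$ to the $\theta$-right centralizer $D(m) = \theta(m) n$. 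The final identification $D(m) = mn$ then follows by decomposing $m = um + r_m$ with $r_m \in \text{ran}(L_0^\infty(w)^*)$ and using the annihilation together with $n \in L^1(w) \cap Z(L_0^\infty(w)^*)$. The main obstacle I anticipate is precisely this last identification $\theta(m) n = mn$: it distinguishes the skew case from the centralizing case, where a $\theta$-right centralizer sufficed, and requires the annihilation $\text{ran}(L_0^\infty(w)^*) \cdot n = 0$, the centrality of $n$, and the $L^1$--$\text{ran}$ decomposition to interlock precisely.
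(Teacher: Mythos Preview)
Your argument for (ii) is correct, though the paper takes the opposite order: it proves (i) first and then derives (ii) in two lines by evaluating the skew--commuting relation at $m=u$, which gives $0=\langle D(u),\theta(u)\rangle=2n$.

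For (i) you miss the single observation that makes the paper's proof almost immediate: the algebraic identity
\[
0=[\,\langle D(m),\theta(m)\rangle,\ \theta(m)\,]=[D(m),\theta(m)^2].
\]
Since $\theta(u)^2=\theta(u)$, setting $m=u$ gives $D(u)\theta(u)=\theta(u)D(u)$, i.e.\ $D(u)=\theta(u)D(u)$; then $d(u)=D(u)-\theta(u)D(u)=0$ at once, and the entire argument of Theorem~2.1(iii) applies verbatim to give $d\equiv 0$ and $D(m)=\theta(m)D(u)$. Your route---via the identity $u(ab)=u(ba)$, the linearization at $u+\lambda r$, and the further linearization to force $\mathrm{ran}\cdot n=0$---does eventually work, but it is far more laborious, and the step ``forces $d(u)=0$ and $D(u)=n$'' is underexplained as written: one needs the intermediate computation $\theta(u)n=\theta(u)\,uD(u)=\theta(u)D(u)$ (from $n\in Z$) together with $\langle D(u),\theta(u)\rangle=D(u)+\theta(u)D(u)=2n$ to conclude $D(u)=n$. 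With the paper's commutator identity you bypass all of this.

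You are right to flag the final identification $\theta(m)n=mn$ as the real sticking point. The paper's own proof stops at $D(m)=\theta(m)\,\theta(u)D(u)$ and simply declares (i); it never explains why $\theta(m)n$ equals $mn$, and your extra work to obtain $\mathrm{ran}\cdot n=0$ is not something the paper does. So either the intended conclusion is $D(m)=\theta(m)n$ with $n\in Z(L_0^\infty(w)^*)$, or the passage from $\theta(m)n$ to $mn$ is a genuine lacuna in the paper that your argument partly addresses.
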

\begin{proof}
$\text{(i)}$. Let $m \in L_0^{\infty} (w)^*$ and
\begin{equation*}
\langle D(m), \theta(m) \rangle \in Z (L_0^{\infty} (w)^*).
\end{equation*}
Then
\begin{equation*}
0= [ \langle D(m), \theta(m) \rangle, \theta(m) ]= [ D(m), \theta(m)^2].
\end{equation*}
It follows that
\begin{equation*}
D(u)= D(u) \theta(u)^2= \theta(u)^2 D(u)= \theta(u) D(u).
\end{equation*}
On the other hand,
\begin{equation*}
D(u)= D(uu) = \theta(u) D(u)+ d(u).
\end{equation*}
Hence $d(u)=0$.
An argument similar to the proof of Theorem 2.1, shows that $D(m)= \theta(m) D(u)$ for all $m \in L_0^{\infty} (w)^*$.
But,
\begin{eqnarray*}
\langle D(u), \theta(u) \rangle &=& D(u) \theta(u) + \theta(u) D(u) \\
&=& \theta(u) D(u)\theta(u) + \theta(u) D(u) \\
&=& 2 \theta(u) D(u)
\end{eqnarray*}
is an elemnt of $Z(L_0^{\infty}(w)^*)$. Since
\begin{equation*}
D(m)= \theta(m) D(u) = \theta(m) \theta(u) D(u),
\end{equation*}
the statement $\hbox{(i)}$ holds.

$\hbox{(ii)}$ Let $D$ be $\theta$-skew commuting.
Then there exists $n \in Z(L_0^{\infty}(w)^*)$ such that
\begin{equation*}
D(m)=mn
\end{equation*}
for all $m \in L_0^{\infty}(w)^*$.
So, if $u \in \Lambda(L_0^{\infty}(w)^*)$, then
\begin{eqnarray*}
0&=& D(u) \theta(u) + \theta(u) D(u) \\
&=& un \theta (u) + \theta(u) u n\\
&=& u \theta (n) n + \theta (u) n \\
&=& \theta(u) n + \theta (u) n \\
&=& 2 n \theta (u) \\
&=& 2n.
\end{eqnarray*}
Hence $n=0$ and therefore, $D=0$.
\end{proof}
\begin{theorem} \label{m4}
Let $\theta$ be a homomorphism on $L_0^{\infty}(w)^*$ and
$(D, d)$ be a $\theta$-generalized derivation on
$L_0^{\infty}(w)^*$. Then $D$ is a $\theta$-derivation if and only if $D=d$.
\end{theorem}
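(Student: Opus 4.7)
The plan is to handle the ``if'' direction by direct substitution and to reduce the ``only if'' direction to a single cancellation argument at a right identity. The ``if'' direction is immediate: substituting $D=d$ into $D(mn)=\theta(m)D(n)+d(m)\theta(n)$ produces $D(mn)=\theta(m)D(n)+D(m)\theta(n)$, which is the $\theta$-derivation identity.

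For the converse, I would assume $D$ is simultaneously a $\theta$-generalized derivation with associated derivation $d$ and a $\theta$-derivation, and equate the two expansions of $D(mn)$ to get $(D(m)-d(m))\theta(n)=0$ for all $m,n$. Writing $T=D-d$, expanding $T(mn)$ shows at once that $T(mn)=\theta(m)T(n)$, so $T$ is a $\theta$-right centralizer. Pick $u\in\Lambda(L_0^{\infty}(w)^*)$. Using that $d$ takes values in $\mathrm{ran}(L_0^{\infty}(w)^*)$---the fact invoked in the proof of Theorem 2.1(iii) via [12]---we have $\theta(m)d(u)=0$, so unpacking $d(m)=d(mu)$ via the $\theta$-derivation identity for $d$ yields $d(m)\theta(u)=d(m)$. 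Setting $n=u$ in $(D(m)-d(m))\theta(n)=0$ then gives $D(m)\theta(u)=d(m)$, and expanding $D(m)=D(mu)$ via the $\theta$-derivation property of $D$ produces
$$D(m)=D(m)\theta(u)+\theta(m)D(u)=d(m)+\theta(m)D(u),$$
so $T(m)=\theta(m)D(u)$ for every $m$.

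The main obstacle is the final step: showing $\theta(m)D(u)=0$ for all $m$. I would substitute $T(m)=\theta(m)D(u)$ back into $T(m)\theta(n)=0$ to obtain $\theta(m)D(u)\theta(n)=0$, and then specialize $m=u$ to get $T(u)\theta(n)=0$ for every $n$. Since $\theta$ is surjective (as in the surrounding theorems, where $\theta$ is treated as an isomorphism), this becomes $T(u)x=0$ for all $x\in L_0^{\infty}(w)^*$; taking $x=u$ forces $T(u)=T(u)u=0$, because the right identity $u$ makes the left annihilator of $L_0^{\infty}(w)^*$ trivial. Combined with the right-centralizer identity $T(m)=\theta(m)T(u)$, this yields $T\equiv 0$, i.e.\ $D=d$. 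The subtle point of the argument is therefore this last cancellation, which essentially requires $\theta$ to have full range.
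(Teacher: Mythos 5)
Your argument follows essentially the same route as the paper's: both proofs expand $D(m)=D(mu)$ for a right identity $u\in\Lambda(L_0^{\infty}(w)^*)$ once via the $\theta$-derivation identity and once via the $\theta$-generalized derivation identity, and then cancel. The difference is that you make explicit the two cancellations the paper performs silently. The paper writes $D(m)\theta(u)+\theta(m)D(u)=D(m)+\theta(m)D(u)$, i.e.\ it assumes $D(m)\theta(u)=D(m)$, which presupposes that $\theta(u)$ is again a right identity; for a bare homomorphism $\theta(u)$ is only an idempotent, and comparing the two expansions gives merely $D(m)\theta(u)=d(m)\theta(u)$, which is circular for this purpose. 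You instead derive $d(m)\theta(u)=d(m)$ correctly from $d(u)\in\mathrm{ran}(L_0^{\infty}(w)^*)$ (no surjectivity needed there), reduce everything to $T(m)\theta(n)=0$ for $T=D-d$, and then invoke surjectivity of $\theta$ to get $T(u)=T(u)u=0$ and hence $T\equiv 0$. That closing step is sound, but note it uses a hypothesis (``$\theta$ onto'') that the theorem as stated does not grant: it assumes only that $\theta$ is a homomorphism. So strictly speaking your proof, like the paper's, establishes the result under the standing assumption used elsewhere in the paper that $\theta$ is an isomorphism (or at least that $\theta$ carries some right identity to a right identity); you deserve credit for flagging exactly where that assumption enters, which the paper does not. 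The ``if'' direction is trivial in both treatments.
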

\begin{proof}
Let $D$ be a $\theta$-derivation.
Then for every $m \in L_0^{\infty}(w)^*$, we have
\begin{eqnarray*}
D(m)= D(m.u) &=& D(m) \theta (u)+ \theta(m) D(u) \\
&=& D(m)+ \theta (m) D(u)
\end{eqnarray*}
and
\begin{equation*}
D(m)= \theta(m) D(u)+d(m) \theta(u) = \theta(m) D(u)+ d(m).
\end{equation*}
Hence $d=D$.
\end{proof}
We denote by $\hbox{rad}(L_0^{\infty}(w)^*)$ the radical of 
$L_0^{\infty}(w)^*$. Before, we give the next result, let us recall that a map $T: L_0^{\infty}(w)^* \rightarrow L_0^{\infty}(w)^*$ is called \emph{spectrally infinitesimal} if $r(T(m))=0$ for all $m \in L_0^{\infty}(w)^*$, where $r(.)$ is the spectral radius. 

\begin{corollary} \label{m5}
Let $\theta$ be a homomorphism on $L_0^{\infty} (w)^*$ and $(D, d)$ be a $\theta$-generalized derivation on $L_0^{\infty} (w)^*$.
If $D$ maps $L_0^{\infty} (w)^*$ into $\emph{rad}(L_0^{\infty}(w)^*)$ or $D$ is spectrally infinitesimal, then $D$
is a $\theta$-derivation.
\end{corollary}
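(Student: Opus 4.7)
By Theorem \ref{m4}, it is enough to show $D=d$ under either hypothesis. The idea is to form the difference $T:=D-d$ and exploit that it is a $\theta$-right centralizer. Indeed, subtracting the identity $d(mn)=d(m)\theta(n)+\theta(m)d(n)$ from $D(mn)=\theta(m)D(n)+d(m)\theta(n)$ yields
\[
T(mn)=\theta(m)\,T(n).
\]
Fixing $u\in\Lambda(L_0^{\infty}(w)^*)$ and using that $u$ is a right identity, the choice $n=u$ gives $T(m)=\theta(m)\,T(u)$ for every $m\in L_0^{\infty}(w)^*$. So the whole corollary is reduced to showing $\theta(m)\,T(u)=0$ for all $m$.

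To control $T(u)$ I would first observe that $d$ maps $L_0^{\infty}(w)^*$ into $\text{ran}(L_0^{\infty}(w)^*)$ — the fact cited from [12] in the proof of Theorem \ref{m1} — and that $\text{ran}(L_0^{\infty}(w)^*)$ is a nilpotent ideal (any two of its elements multiply to zero), hence is contained in $\text{rad}(L_0^{\infty}(w)^*)$. So $d(u)\in\text{rad}(L_0^{\infty}(w)^*)$. Under the first hypothesis, $D(u)\in\text{rad}(L_0^{\infty}(w)^*)$ as well, so $T(u)=D(u)-d(u)\in\text{rad}(L_0^{\infty}(w)^*)$. Under the second hypothesis, adding a radical element leaves the spectral radius unchanged, so $r(T(m))=r(D(m)-d(m))=r(D(m))=0$ for every $m$; substituting $T(m)=\theta(m)\,T(u)$ and applying the spectral-radius characterization of the radical then also places $T(u)$ in $\text{rad}(L_0^{\infty}(w)^*)$.

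The concluding step is the structural identification $\text{rad}(L_0^{\infty}(w)^*)\subseteq\text{ran}(L_0^{\infty}(w)^*)$ for this algebra. With this in hand, $T(u)\in\text{ran}(L_0^{\infty}(w)^*)$, so $\theta(m)\,T(u)=0$ for every $m$, giving $T\equiv 0$, i.e.\ $D=d$. An appeal to Theorem \ref{m4} then completes the proof.

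The most delicate steps are the spectrally infinitesimal case — where we must pass from pointwise quasi-nilpotence of $D(m)$ to the single element $T(u)$ lying in $\text{rad}(L_0^{\infty}(w)^*)$ — and the final use of $\text{rad}(L_0^{\infty}(w)^*)\subseteq\text{ran}(L_0^{\infty}(w)^*)$; both are structural properties of $L_0^{\infty}(w)^*$ that must be invoked correctly in order to annihilate $T(u)$ against every $\theta(m)$.
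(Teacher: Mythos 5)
Your overall strategy is the same as the paper's in outline: reduce everything to showing that a single element ($T(u)$ for you, $D(u)$ for the paper) lies in $\mathrm{ran}(L_0^{\infty}(w)^*)$, and then kill $\theta(m)T(u)$ for every $m$ using the identification of the radical with the right annihilator. Your first case ($D$ maps into the radical) is fine and in fact slightly more direct than the paper's, which folds it into the spectrally infinitesimal case. But your second case has a genuine gap at exactly the step you flag as delicate. From $r(T(m))=0$ for all $m$ you get $r(T(u))=0$ and $r\big(\theta(m)T(u)\big)=0$ for all $m$; the ``spectral-radius characterization of the radical'' requires $r\big(aT(u)\big)=0$ for \emph{every} $a\in L_0^{\infty}(w)^*$, whereas you only control products with elements of $\theta\big(L_0^{\infty}(w)^*\big)$, and in this corollary $\theta$ is merely a homomorphism, not assumed surjective. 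More fundamentally, quasinilpotence of $T(u)$ alone does not place it in the radical of a noncommutative Banach algebra. The missing ingredient is the structural fact the paper leans on: $L_0^{\infty}(w)^*/\mathrm{ran}(L_0^{\infty}(w)^*)$ is isomorphic to the commutative algebra $M(w)$, so by Zemanek's theorem the spectral radius is submultiplicative up to a constant, giving $r\big(aT(u)\big)\le c\,r(a)\,r(T(u))=0$ for \emph{all} $a$, and only then does the characterization of the radical yield $T(u)\in\mathrm{rad}=\mathrm{ran}$.

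Two smaller points. First, your argument routes everything through the fact that $d$ maps into $\mathrm{ran}(L_0^{\infty}(w)^*)$; the paper cites this from [12] only in the setting of Theorem \ref{m1}(iii), where $\theta$ is an isomorphism, and its own proof of the corollary avoids needing it (it shows $D(u)\in\mathrm{ran}$ directly and concludes $D(m)=\theta(m)D(u)+d(m)=d(m)$). You should either justify that a $\theta$-derivation for a general homomorphism $\theta$ still maps into $\mathrm{ran}$, or restructure to work with $D(u)$ as the paper does. Second, your use of the invariance of the spectral radius under perturbation by radical elements is correct but becomes unnecessary once you adopt the paper's route. With the Zemanek/commutativity input added, your proof closes and is essentially the paper's argument.
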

\begin{proof}
First, note that if $D$ maps $L_0^{\infty} (w)^*$ into
$$\text{rad}(L_0^{\infty}(w)^*)= \text{ran}(L_0^{\infty}(w)^*),$$ then $D$ is spectrally infinitesimal.
Hence $r(D(m))=0$ for all $m \in L_0^{\infty} (w)^*$.
One can prove that
$\frac{L_0^{\infty} (w)^*}{ \text{ran}(L_0^{\infty}(w)^*)}$
is isomorphic to the commutative Banach algebra $M(w)$; see [9].
In view of [14], there exists $c>0$ such that
\begin{equation*}
r(m D(u)) \leq c \, r(m) \, r(D(u))=0
\end{equation*}
for all $m \in L_0^{\infty}(w)^*$ and
$u \in \Lambda(L_0^{\infty}(w)^*)$.
It follows from Proposition 25.1 (ii) of [4] and 
that
$$D(u) \in \text{rad}(L_0^{\infty}(w)^*)= \text{ran}(L_0^{\infty}(w)^*).$$
Therefore,
\begin{equation*}
D(m)= \theta(m) D(u)+ d(m)= d(m)
\end{equation*}
for all $m \in L_0^{\infty}(w)^*$.
\end{proof}

\end{document}